\newtheorem{theorem}{Theorem}
\newtheorem{lemma}{Lemma}
\newtheorem{corollary}{Corollary}
\def\gp#1{\langle #1 \rangle}
\def\m1{^{-1}}
\title[regularity]
{On the regularity of crossed products}
\author{V.~Bovdi, S.~Mihovski}
\address{
Department of Math. Sciences,
UAE University - Al-Ain,
United Arab Emirates}
\email{vbovdi@gmail.com}
\address{
Department of Algebra, University of Plovdiv, Bulgaria}
\email{mihovski@uni-plovdiv.bg}
\subjclass[2010]{Primary: 16S35; Secondary:  20C07, 16S34, 16E50}
\keywords{crossed product, twisted group ring, regular ring}
\begin{document}
\begin{abstract}
We study some generalizations of the notion of regular crossed products $K*G$. For the case when $K$  is an algebraically closed field, we give necessary and sufficient conditions for the twisted group ring $K* G$ to be an $n$-weakly regular ring, a $\xi^* N$-ring or a ring without nilpotent elements.
\end{abstract}

\maketitle

%#########################################
\section{Introduction}

Let  $G$ be a group, $U(K)$ the  group of units of the associative ring $K$ with identity and let  $\sigma: G\to \mathrm{Aut}(K)$ be  a map of   $G$ into the group $\mathrm{Aut}(K)$ of automorphisms  of $K$.
Let $K*G=K_\rho^\sigma G=\{\sum_{g\in G}u_g\alpha_g\mid \alpha_g\in K\}$ be the crossed product (in the sense of \cite{Bovdi}),  of  the group $G$ over the ring $K$ with  respect to the factor system
\[
\rho=\{\rho(g,h)\in U(K)\mid g,h\in G\}
\]
and the map $\sigma:G\to \mathrm{Aut}(K)$. Moreover we assume that the factor system $\rho$ is normalized, i.e.
$\rho(g,1)=\rho(1,g)=\rho(1,1)=1$ for any $g\in G$.

In particular, if $\sigma=1$, then the crossed product $K*G$ is called  a {\it twisted group ring}, which we  denote by $K_\rho G$. If the factor system $\rho$ is unitary, i.e.  $\rho(g,h)=1$ for all $g,h\in G$, then $K*G$ is called a {\it skew group ring} and is denoted by $K^{\sigma}G$. In the case, when $\rho=1$ and $\sigma=1$, then $K*G$ is the ordinary group ring $KG$.

In the present paper we study properties of crossed products $K*G$
which are generalizations of the notion of a regular ring. For the
case when $K*G$  is a  twisted group ring over   the algebraically
closed field $K$, we give necessary and sufficient conditions  for
$K*G$ to be an $n$-weakly regular ring ($n\geq 2$), a $\xi^*
N$-ring or a ring without nilpotent elements.  Our investigation
can be considered as a generalization of  certain results of
\cite{Bovdi_Langi, Connell, Gupta, Mihovski, Rakhnev, Utumi}
earlier obtained for group rings. Note that we exclude  the case
when $K*G$ is a skew group ring, so we do not cite any reference
from that topic.

\section{Twisted group algebras without nilpotent elements}

Denote  the $K$-basis of $K*G$ by $U_G=\{u_g \mid  g\in G\}$. The multiplication of $u_g, u_h\in U_G$ is defined by $u_gu_h=\rho(g,h)u_{gh}$, where  $\rho(g,h)\in \rho$ and  $g,h\in G$. The factor system $\rho$ of the crossed product $K*G$ is called {\it symmetric},
if for all elements $g,h\in G$ the condition $gh=hg$ yields  $\rho(g,h)=\rho(h,g)$.
The finite subset
$\mathrm{Supp}(a)=\{g\in G\mid \alpha_g\not=0\}$
of $G$ is called the  {\it support} of the element $a\in K*G$.

We shall  freely use the following.
\begin{lemma}\label{L:1}
Let  $K*G$ be   a crossed product and suppose that $axb=c$ for
some  $x,a,b,c\in K*G$. If $H$ is the subgroup of $G$ generated by
$\mathrm{Supp}(a)$, $\mathrm{Supp}(b)$ and $\mathrm{Supp}(c)$,
then there exists an element $y\in K*H$, such that $ayb=c$.
\end{lemma}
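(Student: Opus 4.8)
The plan is to take for $y$ simply the ``$H$-part'' of $x$, and then to verify by a support count that the discarded terms contribute nothing to the identity $axb=c$. The one elementary tool I would invoke repeatedly is that for any $p,q\in K*G$ one has $\mathrm{Supp}(pq)\subseteq \mathrm{Supp}(p)\cdot\mathrm{Supp}(q)$, the set of products in $G$; this is immediate from $u_gu_h=\rho(g,h)u_{gh}$ and the fact that moving a scalar past a basis element $u_t$ (via the automorphism $\sigma(t)$) can only kill or rescale coefficients, never alter the group index $gh$.

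Writing $x=\sum_{t\in G}u_t\xi_t$ (a finite sum), I would decompose $x=x'+x''$, where $x'=\sum_{t\in H}u_t\xi_t$ collects the terms indexed by $H$ and $x''$ the remaining ones, and then set $y:=x'\in K*H$. Since $\mathrm{Supp}(a)$ and $\mathrm{Supp}(b)$ lie in $H$ and $H$ is a subgroup, $K*H$ is a subring of $K*G$, so $ayb=ax'b\in K*H$. The heart of the matter is to show $\mathrm{Supp}(ax''b)\cap H=\varnothing$: by the support containment, each group element appearing in $ax''b$ is of the form $gth$ with $g\in\mathrm{Supp}(a)\subseteq H$, $h\in\mathrm{Supp}(b)\subseteq H$ and $t\in\mathrm{Supp}(x'')$, hence $t\notin H$; if $gth$ were in $H$ then $t=g^{-1}(gth)h^{-1}\in H$, a contradiction.

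To conclude, I would compare both sides of $axb=ax'b+ax''b=c$. As $\mathrm{Supp}(c)\subseteq H$ we have $c\in K*H$, so the part of $axb$ with support outside $H$ vanishes; by the disjointness just proved this part is exactly $ax''b$, and the part with support in $H$ is $ax'b$. Equating the $K*H$-components therefore gives $ayb=ax'b=c$, as desired.

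The computation is routine; the single load-bearing point is the group-theoretic cancellation ``$g,h\in H$ and $gth\in H$ imply $t\in H$'', which is what makes the off-$H$ terms of $x$ irrelevant. Notably the factor system $\rho$ and the maps $\sigma$ enter only through keeping products of basis elements monomial, so the argument is the same whether $K*G$ is a full crossed product, a twisted group ring, or an ordinary group ring.
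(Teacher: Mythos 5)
Your proposal is correct and follows essentially the same route as the paper: both decompose $x$ into its $H$-part and its complement, and both use the cancellation ``$g,h\in H$ and $gth\in H$ imply $t\in H$'' to show the off-$H$ terms contribute nothing. Your write-up is slightly more explicit about why $\sigma$ and $\rho$ do not disturb the support argument, but the substance is identical.
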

\begin{proof}
Indeed, if $x=y+z$, then $ayb+azb=c$, where  $y=\sum_{h\in H}u_h\alpha_h$ and  $z=\sum_{g\not\in H}u_g\beta_g$.
This shows that $\mathrm{Supp}(azb)\subseteq H$. Since $fgh\not\in H$ for $f\in  \mathrm{Supp}(a)$, $g\in  \mathrm{Supp}(z)$ and $h\in  \mathrm{Supp}(b)$, we conclude that $azb=0$ and $ayb=c$, as it was requested.
\end{proof}

\begin{corollary}\label{C:1}
If $g\in G$ has  infinite order, then $u_g-1$ is neither a one-sided zero divisor, nor a one-sided invertible element of the crossed product $K*G$.
\end{corollary}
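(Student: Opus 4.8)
The plan is to reduce the entire statement to the subring $K*H$, where $H=\gp{g}$ is the infinite cyclic group generated by $g$, and then to settle all four assertions (not a left/right zero divisor, not left/right invertible) by a single grading argument inside $K*H$. First I record that every $u_h$ is a unit of $K*G$: since $u_hu_{h\m1}=\rho(h,h\m1)$ and $u_{h\m1}u_h=\rho(h\m1,h)$ both lie in $U(K)\subseteq U(K*G)$, the basis element $u_h$ has a right and a left inverse, hence is a two-sided unit. Consequently left and right multiplication by $u_h$ are bijections of $K*G$ that shift supports, $\mathrm{Supp}(u_ha)=h\,\mathrm{Supp}(a)$ and $\mathrm{Supp}(au_h)=\mathrm{Supp}(a)h$, and in particular they preserve nonzero leading and trailing coefficients.

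For the reduction I would use Lemma~\ref{L:1}. For the invertibility equations this is direct: if $(u_g-1)b=1$, then writing it as $(u_g-1)\,b\,\cdot 1=1$ and taking $H$ to be the subgroup generated by $\mathrm{Supp}(u_g-1)=\{1,g\}$ and $\mathrm{Supp}(1)=\{1\}$, namely $H=\gp{g}$, Lemma~\ref{L:1} yields $y\in K*\gp{g}$ with $(u_g-1)y=1$; the equation $b(u_g-1)=1$ reduces the same way. For the zero-divisor equations the reduced element must stay nonzero, so instead of applying Lemma~\ref{L:1} verbatim I argue with cosets: given $(u_g-1)b=0$ with $b\neq0$, decompose $b=\sum_i b_i$ according to the right cosets $\gp{g}t_i$ of $\gp{g}$ in $G$. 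Since $u_g-1\in K*\gp{g}$ and left multiplication by $K*\gp{g}$ sends the $\gp{g}t_i$-component into itself, the products $(u_g-1)b_i$ have pairwise disjoint supports, whence $(u_g-1)b_i=0$ for every $i$. Choosing $i$ with $b_i\neq0$ and writing $b_i=c_iu_{t_i}$ with $0\neq c_i\in K*\gp{g}$, the unit $u_{t_i}$ cancels on the right and gives $(u_g-1)c_i=0$ with $c_i\neq0$ inside $K*\gp{g}$.

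It then remains to prove the four statements inside $R=K*\gp{g}$. I identify $\gp{g}$ with $\mathbb Z$ via $g^{n}\mapsto n$, and for $0\neq a=\sum_n u_{g^{n}}\gamma_n\in R$ I set $\deg a=\max\{n:\gamma_n\neq0\}$ and $\mathrm{ord}\,a=\min\{n:\gamma_n\neq0\}$; these are genuine integers precisely because $g$ has infinite order. Because $u_g$ is a unit, left multiplication by $u_g$ carries the exponent-$n$ piece bijectively onto the exponent-$(n+1)$ piece, so $\deg(u_gb)=\deg b+1$ and $\mathrm{ord}(u_gb)=\mathrm{ord}\,b+1$ with nonzero leading/trailing coefficients. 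Hence in $(u_g-1)b=u_gb-b$ the exponent $\deg b+1$ is attained only by $u_gb$ and the exponent $\mathrm{ord}\,b$ only by $-b$, so $(u_g-1)b$ has nonzero coefficients at the two distinct exponents $\mathrm{ord}\,b$ and $\deg b+1$. Therefore $(u_g-1)b$ is neither $0$ nor $1$ (the latter being supported at the single exponent $0$), which excludes $u_g-1$ from being a left zero divisor or right invertible; the symmetric computation with $b(u_g-1)=bu_g-b$ excludes the right zero divisor and left invertible cases. The one genuinely delicate point, and the step I expect to be the main obstacle, is the preservation of nonzero-ness in the zero-divisor reduction, since Lemma~\ref{L:1} with right-hand side $0$ does not by itself produce a nonzero solution in $K*\gp{g}$; this is exactly what the coset decomposition above is designed to repair, after which the grading argument is immediate.
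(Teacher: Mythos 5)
Your proof is correct, and in outline it is the same as the paper's: reduce to the subalgebra $K*\langle g\rangle$ over the infinite cyclic group and exploit the ordering of $\mathbb Z$ via a leading/trailing coefficient argument (the paper compresses this last step into the single phrase ``$H$ is an ordered group''). The one place where you genuinely go beyond the paper is the zero-divisor reduction, and you are right to flag it as the delicate point: Lemma~\ref{L:1} applied to $(u_g-1)x=0$ produces some $y\in K*\langle g\rangle$ with $(u_g-1)y=0$, but nothing prevents $y=0$ (indeed $y$ is just the part of $x$ supported in $\langle g\rangle$), so the paper's ``we may assume $u_g-u_1$ is also such an element of $K*H$'' is not literally justified by the lemma in the zero-divisor case. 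Your decomposition of $b$ along the right cosets $\langle g\rangle t_i$, using that left multiplication by $K*\langle g\rangle$ preserves each coset component and that $u_{t_i}$ is a unit which can be stripped off, is a clean and correct repair; the grading argument that $(u_g-1)b$ has nonzero coefficients at the two distinct exponents $\mathrm{ord}\,b$ and $\deg b+1$ then settles all four assertions at once, exactly as the ordered-group fact would.
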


\begin{proof}
In fact, if $u_g-1$ is either a one-sided zero divisor, or a one-sided invertible element of $K*G$, then by Lemma \ref{L:1}, we may assume that $u_g-u_1$ is also   such an element of $K*H$, where $H=\gp{g}$ is an infinite cyclic group. But $H$ is an ordered group, a contradiction.
\end{proof}

For twisted group algebras we give a refinement of Corollary 2 and
Lemma 2 of \cite{Mihovski} (see p.68) which were earlier proved
for group rings.

\begin{theorem}\label{T:1}
Let $K_\rho G$ be a twisted group algebra  of a torsion group $G$ over the algebraically closed
field $K$. The ring  $K_\rho G$ does not contain nilpotent elements if
and only if the following conditions hold:
\begin{itemize}
\item[(i)] $G$ is an abelian group;
\item[(ii)] the order of every elements in $G$ is  invertible in $K$;
\item[(iii)]  the factor system $\rho$ is symmetric.
\end{itemize}
\end{theorem}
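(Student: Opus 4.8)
The plan is to handle both implications by reducing to finite subgroups and to exploit a single clean reformulation: conditions (i) and (iii) together are \emph{exactly} the statement that $K_\rho G$ is commutative. Indeed, $K$ is central (as $\sigma=1$), so commutativity is equivalent to $u_gu_h=u_hu_g$ for all $g,h$; since $u_gu_h=\rho(g,h)u_{gh}$ and $u_hu_g=\rho(h,g)u_{hg}$ and distinct basis vectors are $K$-linearly independent, this identity holds if and only if $gh=hg$ \emph{and} $\rho(g,h)=\rho(h,g)$. Throughout I will use that a finitely generated torsion abelian group is finite (so that $\gp{\mathrm{Supp}(a)}$ is finite once $G$ is abelian), that each $u_h$ is a unit because $u_hu_{h\m1}=\rho(h,h\m1)u_1\in U(K)u_1$, and the standard fact that in a reduced ring every idempotent is central (if $e^2=e$ then $ex(1-e)$ and $(1-e)xe$ square to zero, hence vanish).

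For sufficiency, assume (i)--(iii). By the remark above $K_\rho G$ is commutative. Suppose some $a\neq 0$ were nilpotent and put $H=\gp{\mathrm{Supp}(a)}$, a finite abelian group; then $a$ lies in the subalgebra $K_\rho H$ and is nilpotent there. By (ii) and Cauchy's theorem no prime dividing $|H|$ equals $\mathrm{char}\,K$, so $|H|$ is invertible in $K$ and $K_\rho H$ is semisimple by the Maschke-type theorem for crossed products. A commutative semisimple finite-dimensional $K$-algebra is a finite product of fields, hence reduced, contradicting the nilpotence of $a$. Thus $K_\rho G$ has no nilpotent elements.

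For necessity, assume $K_\rho G$ is reduced, and first prove (ii). Fix $g\in G$ of order $n$ and write $u_g^{n}=c\,u_1$ with $c\in U(K)$. As $K$ is algebraically closed I may choose $d\in K$ with $d^{n}=c$ and set $v=d\m1 u_g$, so that $v^{n}=1$ and $1,v,\dots,v^{n-1}$ form a $K$-basis of $K_\rho\gp{g}$; hence $K_\rho\gp{g}\cong K[x]/(x^{n}-1)$. If $n$ were not invertible, then $\mathrm{char}\,K=p$ divides $n$, so $x^{n}-1$ has a repeated root and $K[x]/(x^{n}-1)$ — and therefore $K_\rho G$ — contains a nonzero nilpotent, a contradiction. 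This gives (ii).

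It remains to show $K_\rho G$ is commutative, which yields (i) and (iii); this is the main obstacle. Keep $v=d\m1 u_g$ with $v^{n}=1$. Since $n$ is invertible and $K$ contains all $n$ distinct $n$-th roots of unity, for each such root $\zeta$ the element $e_\zeta=\tfrac1n\sum_{j=0}^{n-1}\zeta^{-j}v^{j}$ is an idempotent, the $e_\zeta$ are orthogonal with $\sum_\zeta e_\zeta=1$, and $v=\sum_\zeta\zeta e_\zeta$. Because $K_\rho G$ is reduced, each $e_\zeta$ is central and so commutes with $u_h$ for an arbitrary $h\in G$. Setting $v'=u_h v u_h\m1$, centrality gives $v'e_\zeta=u_h v e_\zeta u_h\m1=\zeta\,u_h e_\zeta u_h\m1=\zeta e_\zeta$ for every $\zeta$, whence $v'=\sum_\zeta\zeta e_\zeta=v$; that is, $u_h$ commutes with $v$ and therefore with $u_g=d\,v$. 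As $g,h$ are arbitrary, all basis elements commute, so $K_\rho G$ is commutative and (i), (iii) follow. The delicate point is precisely this diagonalization: it simultaneously forces $G$ to be abelian and $\rho$ to be symmetric, and it is where algebraic closedness and the invertibility of orders from (ii) are indispensable.
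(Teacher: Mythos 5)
Your proof is correct. The sufficiency direction and the derivation of (ii) are essentially the paper's own argument (reduce to a finite subgroup, invoke the Maschke/Passman semisimplicity theorem for twisted group rings; detect a nilpotent in $K[x]/(x^n-1)$ when $\mathrm{char}(K)$ divides $n$ --- the paper uses $(1+v_g+\cdots+v_g^{p-1})^p=0$, which is the same phenomenon). But your proof of (i) and (iii) is genuinely different. The paper works with the single square-zero element $x=(v_g-1)u_h(1+v_g+\cdots+v_g^{n-1})$, whose vanishing gives $u_h=v_gu_hv_g^i$ and hence $h^{-1}gh=g^{-i}$; this shows every cyclic subgroup is normal, so $G$ is abelian or hamiltonian, and the hamiltonian case must then be excluded by a separate explicit computation in a quaternion subgroup $Q_8$ using a nontrivial solution of $\alpha^2+\beta^2=0$. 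You instead prove (ii) first, then diagonalize: $v_g=\sum_\zeta \zeta e_\zeta$ with the $e_\zeta$ orthogonal idempotents (available because $n$ is invertible and $K$ is algebraically closed), use the standard fact that idempotents in a reduced ring are central, and conclude $u_hv_gu_h^{-1}=v_g$ directly. This bypasses the Dedekind-group dichotomy and the $Q_8$ computation entirely; in effect the paper uses only the single idempotent $\frac1n(1+v_g+\cdots+v_g^{n-1})$ belonging to $\zeta=1$, while you use the full system, which is what lets you jump straight to commutativity. The paper's route is more elementary and exposes the intermediate group-theoretic structure (normality of all cyclic subgroups), which it reuses in Theorem~\ref{T:5}; your route is shorter and more conceptual, at the cost of relying on (ii) being established before the commutativity step (which you correctly arrange). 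Both arguments are sound.
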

\begin{proof}
Assume that the conditions  (i), (ii) and (iii) hold. Then the twisted group ring $K_\rho G$ is commutative. If $x\in K_\rho G$ is a nonzero nilpotent element and $H=\gp{ \mathrm{Supp}(x)}$, we conclude that $K_\rho H$ is a commutative  artinian ring with a nonzero nilpotent element  $x$. So, by Theorem 2.2 of (\cite{Passman_radical_I}, p.415), we get a contradiction.

Conversely, let $K_\rho G$ be a twisted group ring without nilpotent elements. If $g\in G$ is of order $n$ and
$u_g^n=u_1\alpha_g$, where  $\alpha_g\in U(K)$, then there exists an element $\mu_g\in U(K)$ such that
$\mu_g^n=\alpha_g^{-1}$, because $K$ is algebraically closed. So for the element $v_g=u_g\mu_g$ we
have  $v_g^n=1$. Obviously,
\[
x=(v_g-1)u_h(1+v_g+v_g^2+\cdots+v_g^{n-1})
\]
is a nilpotent element of    $K_\rho G$ for all $h\in G$ as far as $x^2=0$. Thus $x=0$, so we conclude that
\begin{equation}\label{E:1}
u_h=v_gu_hv_g^i\quad\qquad (0\leq i\leq n-1).
\end{equation}
Examining  the supports  we can deduce that $h\m1gh=g^{-i}$ \quad ($h\in G$). Therefore  all cyclic subgroups of $G$ are normal. This implies that $G$ is either abelian or hamiltonian. If $gh=hg$, then $i=n-1$ and by (\ref{E:1}) it follows that $u_hv_g=v_gu_h$, since $v_g^n=1$ is the identity element of $K_\rho G$. So we conclude that $\rho(g,h)=\rho(h,g)$, i.e. the factor system $\rho$ is symmetric and condition (iii) holds.

If $\mathrm{char}(K)=p>0$ and $G$ contains an element $g$ of order $p$, then
\[
(1+v_g+v_g^2+\cdots+v_g^{p-1})^p=0
\]
and we get a contradiction. This implies that  condition (ii) also follows.

Assume that $G$ is hamiltonian and $\gp{g,h\mid g^4=h^4=1, g^2=h^2, g^h=g^{-1}}\cong Q_8$ is the quaternion group of order $8$. Then $h\m1gh=g\m1$ and $i=1$. Therefore in this case by (\ref{E:1}) we have  $u_h=v_gu_hv_g$, i.e.
\begin{equation}\label{E:2}
v_h=v_gv_hv_g,
\end{equation}
where $v_h=u_h\mu_h$ and $v_g^4=v_h^4=1$. Since $G$ contains $2$-elements,  it follows from  (ii) that $\mathrm{char}(K)\not=2$.

$K$ being  an algebraically closed field, it is clear that there exist nonzero elements $\alpha, \beta\in K$ for which $\alpha^2+ \beta^2=0$. Then by
(\ref{E:2}) it is easy to verify that
\[
w=\alpha(v_g^2v_h-v_h)+ \beta(v_g^3v_h-v_gv_h)
\]
is a nonzero nilpotent element of $K_\rho G$.

Indeed, $h\in \mathrm{Supp}\big(\alpha(v_g^2v_h-v_h)\big)$, but $h\not \in \mathrm{Supp}\big(\beta(v_g^3v_h-v_gv_h)\big)$.
Thus we have  $w\not=0$. Moreover, by (\ref{E:2}) we obtain that
$u_h^2v_g=v_gu_h^2$ and $u_hv_g^2=v_g^2u_h$.
Then $w^2=(v_g^2-1)^2(\alpha v_h+\beta v_gv_h)^2$. Since $(v_g^2-1)^2=2(1-v_g^2)$ and
\[
\begin{split}
(\alpha v_h+\beta v_gv_h)^2=(\alpha^2+\beta^2)v_h^2+\alpha\beta v_h^2(v_g^2+1)v_g&\\
=\alpha\beta v_h^2(v_g^2+1)v_g&,
\end{split}
\]
we obtain $w^2=2(1-v_g^2)\alpha\beta v_h^2(1+v_g^2)v_g=0$,
which is impossible. Hence  condition (i) follows, as requested.
\end{proof}

%#########################################

\section{Regular crossed products}

An associative ring $R$ with unity
is called {\it regular (strongly regular)} if for
every  $a\in R$ there is an element $b\in R$, such that $aba=a$
($ba^2=a$, respectively). A ring $R$ is called $\xi^*$-ring ($\xi^* N$-ring)   if for every
$a\in R$ there exists $b\in R$ such that  $aba-a$ is a central
(central nilpotent, respectively) element of $R$. It is clear that every regular ring is
a $\xi^* N$-ring and every $\xi^* N$-ring is a $\xi^*$-ring (see \cite{Mihovski, Utumi}).

By the theorem of Auslander, Connell and Willamayor (see \cite{Connell}, Theorem 3, p.660), it is well known
that a group ring is regular if and only if $K$ is regular, $G$ is a locally finite group
and the order of every element $g\in G$ is invertible in $K$.

\smallskip
Our first result for this section is the following.
\begin{theorem}\label{T:2}
Let $K*G$  be a crossed product of the group $G$ over the ring $K$ such that
one of the following conditions is satisfied:
\begin{itemize}
\item[(i)] $K*G$  is a $\xi^* N$-ring;
\item[(ii)] $K*G$ is  $n$-weakly regular.
\end{itemize}
Then $G$ is a torsion group.
\end{theorem}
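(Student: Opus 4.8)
The plan is to argue by contradiction: assume $G$ is not torsion, fix $g\in G$ of infinite order, and put $a=u_g-1$. By Corollary \ref{C:1}, $a$ is neither a one-sided zero divisor nor a one-sided invertible element of $K*G$. I would show that each of the hypotheses (i) and (ii) forces $a$ to be one-sided invertible, contradicting Corollary \ref{C:1}. Throughout I would use the fact, immediate from Corollary \ref{C:1}, that since $a$ is not a left zero divisor neither is any power $a^{m}$: from $a^{m}x=0$ one peels off the factors of $a$ one at a time to get $x=0$. This left-cancellation of $a$ (and of $a^{m}$) is the engine of both cases.

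For (i), suppose $K*G$ is a $\xi^* N$-ring. Then there is $b$ with $aba-a=c$, where $c$ is central and $c^{m}=0$ for some $m$. Writing $e=ba$, the relation becomes $a(e-1)=c$. The key step is to \emph{symmetrise} this using the centrality of $c$: right-multiplying by $a$ gives $a(e-1)a=ca=ac$, so $a\big[(e-1)a-c\big]=0$, and cancelling $a$ on the left yields $(e-1)a=c$ as well. Hence $a$ and $e-1$ commute, so $0=c^{m}=a^{m}(e-1)^{m}$; cancelling the non-zero-divisor $a^{m}$ gives $(e-1)^{m}=0$. Thus $e-1$ is nilpotent, $e=ba$ is a unit of $K*G$, and $e^{-1}b\cdot a=1$ exhibits $a$ as left-invertible, the desired contradiction.

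For (ii), suppose $K*G$ is $n$-weakly regular. I would apply the defining relation to $a=u_g-1$, which expresses $a$ through products in which $a$ occurs as a repeated proper factor. Extracting one copy of $a$ and cancelling it on the appropriate side, legitimate because $a$ (hence $a^{m}$) is not a one-sided zero divisor, collapses the relation to an identity of the form $xa=1$ or $ax=1$ (first producing a one-sided inverse of a power $a^{m}$ if necessary, whence one for $a$). This again makes $a$ one-sided invertible, contradicting Corollary \ref{C:1}. If the witnesses produced by $n$-weak regularity have support outside $\gp{g}$, I would first invoke Lemma \ref{L:1} to transport the relevant identity into $K*H$ with $H=\gp{g}$ infinite cyclic before cancelling.

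The main obstacle is the bookkeeping in case (ii): one must unwind the precise factorisation coming from $n$-weak regularity and check that the left-cancellation of $a$ (and of $a^{m}$) is applied to a genuine equality, so that what remains really is a one-sided unit and not merely an element of the ideal generated by $a$. In case (i) the only delicate point is the symmetrisation $a(e-1)=(e-1)a=c$, which is exactly what lets the nilpotence of $c$ transfer to $e-1$; without the centrality of $c$ this step fails.
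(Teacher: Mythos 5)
Your argument for case (i) is correct and takes a genuinely different route from the paper's. The paper proves by induction that $[b(u_g-1)-1]^k x^{n-k}=0$ for every $k$, peeling off one factor of $u_g-1$ at each step; you instead symmetrise $a(e-1)=c$ into $(e-1)a=c$ by right-multiplying by $a$, using centrality of $c$ and left-cancellation of $a$, so that $a$ commutes with $e-1$, whence $0=c^m=a^m(e-1)^m$ and $(e-1)^m=0$ after cancelling $a^m$. This is arguably cleaner and reaches the same contradiction ($ba$ is a unit, so $a$ is left invertible, against Corollary \ref{C:1}).

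Case (ii), however, has a genuine gap. From $a=aba^nc$ the only cancellation available is on the left, giving $ba^nc=1$; after that, $a$ sits strictly inside the product $ba^nc$, and no further cancellation of $a$ or of $a^m$ (which only applies to equations of the form $a^m y=0$ or $a^m y=a^m z$) can move it to an end. The identity $ba^nc=1$ says that $a^nc$ has a left inverse and $ba^n$ has a right inverse; it does not by itself produce a one-sided inverse for $a^n$ or for $a$. The missing ingredient, which the paper supplies, is ring-theoretic: an $n$-weakly regular ring has no nonzero nilpotent elements (if $a^2=0$ then $a=aba^nc=0$ since $n\ge 2$), hence all its idempotents are central. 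The paper sets $x=(u_g-1)^{n-1}c$, so that $b(u_g-1)x=1$, notes that $e=xb(u_g-1)$ is an idempotent, and uses its centrality to convert the right identity $b(u_g-1)x=1$ into the left identity $xb(u_g-1)=1$; equivalently, a reduced ring is Dedekind-finite, so $b\cdot(a^nc)=1$ forces $(a^nc)\cdot b=1$ and $a$ is right invertible. Your proposal never invokes reducedness or the centrality of idempotents, and describing this step as ``bookkeeping'' conceals that it is the actual content of case (ii). Note also that Lemma \ref{L:1} cannot transport $a=aba^nc$ into $K*\gp{g}$: it handles a single unknown sandwiched between fixed factors, whereas here there are two unknowns $b$ and $c$.
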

\begin{proof}
(i) Suppose that $g\in G$ is an element of infinite order. Then there exists a  $b\in K*G$ and a natural number $n\geq 1$ such that
\[
x=(u_g-1)b(u_g-1)-(u_g-1)
\]
is a central element of $K*G$ and $x^n=0$. If $n=1$, then $x=0$ and
\[
(u_g-1)[b(u_g-1)-1]=0.
\]
Since,  by Corollary \ref{C:1},  the element $u_g-1$ is not a left zero divisor in $K*G$,
we obtain that $b(u_g-1)=1$, i.e. $u_g-1$ is a left invertible
element in $K*G$, which is also impossible. Therefore $n>1$ and
\[
x^n=(u_g-1)[b(u_g-1)-1]x^{n-1}=0.
\]
In the same way we obtain that $z_1=[b(u_g-1)-1]x^{n-1}=0$. Suppose that for some $k\geq 1$ we have\quad
$
z_k=[b(u_g-1)-1]^kx^{n-k}=0$.
\quad
If $1<k<n$,  as far as $x$ is central,
\[
\begin{split}
z_k&=x[b(u_g-1)-1]^kx^{n-k-1}\\
&=(u_g-1)[b(u_g-1)-1]^{k+1}x^{n-k-1}=0.
\end{split}
\]
Now applying Corollary \ref{C:1} we obtain that
\[
z_{k+1}=[b(u_g-1)-1]^{k+1}x^{n-k-1}=0.
\]
Thus, by induction we conclude that $z_n=[b(u_g-1)-1]^{n}=0$.

The last equality shows that there exists  $z\in K*G$ such that $z(u_g-1)=1$, which, by Corollary \ref{C:1}, is impossible.

(ii) Suppose that $g\in G$ is an element of infinite order. Then for some $b,c\in K*G$ we have  $u_g-1=(u_g-1)b(u_g-1)^nc$.
By Corollary \ref{C:1} we have
\[
(u_g-1)[1-b(u_g-1)^nc]=0,
\]
 we conclude that $b(u_g-1)^nc=1$. Hence it follows  that $b(u_g-1)x=1$, where $x=(u_g-1)^{n-1}c$. If
$e=xb(u_g-1)$, then
\[
e^2=x[b(u_g-1)x]b(u_g-1)=xb(u_g-1)=e,
\]
i.e. $e$ is a central idempotent of $K*G$. Thus we have
\[
\begin{split}
1=b(u_g-1)x&=b(u_g-1)[xb(u_g-1)]x\\
&=xb(u_g-1)[b(u_g-1)x]=xb(u_g-1),
\end{split}
\]
i.e. $u_g-1$ has a left invertible element $xb\in K*G$. Now again by Corollary \ref{C:1} we obtain a contradiction, so the proof is complete. \end{proof}

\begin{corollary}\label{C:2}
If the crossed product  $K*G$  is a  regular ring,  then $K$ is also a regular ring and  $G$ is a torsion group.
\end{corollary}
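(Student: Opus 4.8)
The plan is to prove the two assertions separately, using Theorem~\ref{T:2} for the torsion statement and Lemma~\ref{L:1} for the regularity of $K$.

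For the claim that $G$ is a torsion group, I would invoke the observation recorded just after the definition of $\xi^*N$-rings, namely that every regular ring is a $\xi^* N$-ring. Hence if $K*G$ is regular it is in particular a $\xi^* N$-ring, and Theorem~\ref{T:2}(i) immediately yields that $G$ is a torsion group. No further computation is needed here.

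For the claim that $K$ is regular, I would first note that, since $\rho$ is normalized, $u_1$ is the identity of $K*G$, so $K$ may be identified with the subring $\{u_1\alpha \mid \alpha\in K\}$ of $K*G$, every element of which has support contained in $\{1\}$. Now fix $\alpha\in K$. Regularity of $K*G$ provides some $b\in K*G$ with $\alpha b\alpha=\alpha$. I would then apply Lemma~\ref{L:1} to this equation, taking the two outer factors and the right-hand side all equal to $\alpha$: their supports lie in $\{1\}$, so the subgroup $H$ they generate is trivial and $K*H=K$. The lemma therefore furnishes an element $y\in K$ with $\alpha y\alpha=\alpha$, which shows that $K$ is regular.

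The one point deserving care is this last step: a subring of a regular ring need not be regular in general, so it is essential that the regularity witness can be relocated inside $K$ itself. This is exactly what Lemma~\ref{L:1} guarantees, by collapsing the relevant subgroup to the identity. The remaining bookkeeping, that $u_1$ is the identity and that $K$ sits inside $K*G$ as the support-$\{1\}$ elements, is routine.
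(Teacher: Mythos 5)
Your proof is correct and follows exactly the route the paper intends: the paper's own proof consists of the single line ``follows from Theorem~\ref{T:2} and Lemma~\ref{L:1},'' and you have filled in precisely the two steps it is gesturing at (regular $\Rightarrow$ $\xi^*N$-ring $\Rightarrow$ $G$ torsion via Theorem~\ref{T:2}, and relocation of the regularity witness into $K*\{1\}=K$ via Lemma~\ref{L:1}). Your remark that Lemma~\ref{L:1} is what saves the day, since subrings of regular rings need not be regular, is exactly the right point to flag.
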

\begin{proof}
The claim follows from  Theorem \ref{T:2} and Lemma \ref{L:1}.
\end{proof}

Observe that the theorem of Auslander, Connell and Willamayor (see \cite{Connell}, Theorem 3, p.660) does  not apply  for crossed products.
Indeed,  if $K$ is a non-perfect field of characteristic $p>0$ and $G$ is the $p^\infty$-group, then there
exists a twisted group ring $K_\rho G$, which must be  a field (see \cite{Passman_radical_II}, Proposition 4.2).

If  $G$ satisfies the maximum condition for finite normal subgroups and the group ring $KG$ is a $\xi^* N$-ring,
then  $G$ is locally finite (see  \cite{Rakhnev}, Theorem 3, p.16).

We shall prove the locally finiteness of $G$ without the
assumption of the maximum condition when $K$ is a field. First we
recall that (see \cite{Passman_book}, p.308)
\[
\Delta(G)=\{g\in G\mid [G:C_G(g)]<\infty\}
\]
is a subgroup of $G$, where $C_G(g)$ is the centralizer of $g$ in $G$. Furthermore, we put
\[
\Delta^{p}(G)=\gp{\; g\in \Delta(G) \mid g \quad \text{is a $p$-element}\;  },
\]
that is   the subgroup of $\Delta(G)$ which  is generated by all $p$-elements of $\Delta(G)$.

Now we are ready to prove the following.
\begin{theorem}\label{T:3}
Let $KG$ be the group algebra of a group $G$ over a  field $K$.
If   $KG$  is a  $\xi^*N$-ring, then $G$ is a locally finite group.
Moreover,  if $\mathrm{char}(K)=p>0$   then $\Delta^p(G)$ contains all $p$-elements of $G$.
\end{theorem}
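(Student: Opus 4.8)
The plan is to combine Theorem \ref{T:2} with a reduction to the semiprime case, where the $\xi^*N$-condition collapses to ordinary regularity. Since $KG$ is a $\xi^*N$-ring, Theorem \ref{T:2} already gives that $G$ is torsion; hence, when $\mathrm{char}(K)=p>0$, every $g\in G$ has a well-defined $p$-part and $p'$-part. I would first prove the assertion on $p$-elements, as it also drives the proof of local finiteness. Let $\mathrm{char}(K)=p>0$ and let $g$ be a $p$-element. In characteristic $p$ the element $a=g-1$ is nilpotent, so I choose $b\in KG$ with $aba-a=c$ central and nilpotent. Then $c$ is supported on $\Delta(G)$ (central elements are combinations of finite class sums), and $\varepsilon(c)=\varepsilon(a)\varepsilon(b)\varepsilon(a)-\varepsilon(a)=0$ for the augmentation $\varepsilon$. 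Passing to $K[G/\Delta(G)]$ via the natural epimorphism therefore kills $c$, so the image $\bar a=\bar g-1$ satisfies $\bar a\bar b\bar a=\bar a$. By Lemma \ref{L:1} the middle factor may be taken inside $K\gp{\bar g}$, and since $\gp{\bar g}$ is cyclic this is a commutative ring in which $\bar a$ is nilpotent. A nilpotent von Neumann regular element of a commutative ring is $0$, forcing $\bar g=1$, i.e. $g\in\Delta(G)$. Thus every $p$-element lies in $\Delta(G)$ and hence in $\Delta^{p}(G)$, which is the final assertion of the theorem.

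For local finiteness the crucial observation is that if $c$ is a central nilpotent element of a semiprime ring then $cR$ is a nilpotent two-sided ideal, so $c=0$; consequently in a semiprime $\xi^*N$-ring the defining relation becomes $aba=a$, i.e. the ring is von Neumann regular. The strategy is therefore to reduce to a semiprime group algebra. In characteristic $0$, $KG$ is already semiprime, so $KG$ is regular and $G$ is locally finite by the theorem of Auslander, Connell and Willamayor (\cite{Connell}, Theorem 3). In characteristic $p>0$ I set $N=\Delta^{p}(G)$; by the previous paragraph $N$ is exactly the subgroup generated by all $p$-elements of $G$, and being contained in the torsion $FC$-group $\Delta(G)$ it is a locally finite normal subgroup. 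Since $G$ is torsion, the $p$-part of each element already lies in $N$, whence $G/N$ is a $p'$-group.

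Consequently $K[G/N]$ is semiprime (\cite{Passman_book}) and, being a homomorphic image of $KG$, is again a $\xi^*N$-ring (the image of a central nilpotent element is central and nilpotent). By the collapse just noted $K[G/N]$ is regular, so $G/N$ is locally finite by Auslander, Connell and Willamayor. Finally, $G$ is an extension of the locally finite group $N$ by the locally finite group $G/N$, and such an extension is locally finite; this completes the argument.

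The main obstacle is precisely the passage to a semiprime quotient: one cannot invoke the regularity criterion for $KG$ itself, because $KG$ fails to be semiprime exactly when $\Delta^{p}(G)\neq1$, and the $\xi^*N$-condition is not inherited by subalgebras such as $KH$ (for example, the projection of a central nilpotent element onto $KH$ need not be nilpotent, so a naive restriction argument is unavailable). What makes the reduction go through is the structural fact, extracted via Lemma \ref{L:1} from a cyclic, hence commutative, situation, that all $p$-elements already lie in $\Delta(G)$; factoring out $\Delta^{p}(G)$ then simultaneously removes all nilpotence coming from $p$-torsion and leaves a $p'$-group, on which $\xi^*N$ forces genuine regularity.
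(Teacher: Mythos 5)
Your proof is correct, but it runs in the opposite direction from the paper's and rests on different machinery. The paper passes immediately to $KG/\mathfrak{N}(KG)$ (which is regular because every central nilpotent element lies in $\mathfrak{N}(KG)$), then invokes Passman's Theorem 8.19 to identify $\mathfrak{N}(KG)$ with $\mathfrak{Rad}(K[\Delta^p(G)])KG\subseteq\omega(K[\Delta^p(G)])KG$; thus $K[G/\Delta^p(G)]$ is regular, and the Auslander--Connell--Villamayor theorem yields in one stroke that $G/\Delta^p(G)$ is locally finite \emph{and} $p'$-torsion-free, from which the ``moreover'' clause falls out as a corollary. You instead prove the ``moreover'' clause first and elementarily: for a $p$-element $g$ you observe that the central nilpotent defect $c=(g-1)b(g-1)-(g-1)$ is supported on $\Delta(G)$ and has zero augmentation, so it dies in $K[G/\Delta(G)]$, and Lemma~\ref{L:1} plus the fact that a nilpotent regular element of a commutative ring vanishes forces $g\in\Delta(G)$. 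With that in hand, $G/\Delta^p(G)$ is a $p'$-group (using the torsion conclusion of Theorem~\ref{T:2}), so its group algebra is semiprime, and your observation that a semiprime $\xi^*N$-ring is regular lets you finish with the same two external inputs as the paper (Auslander--Connell--Villamayor and the extension theorem for locally finite groups). What you buy is independence from the precise $\Delta$-methods description of the nilpotent radical, replaced by the much more standard semiprimeness criterion for group algebras of groups without $p$-torsion, together with a direct conceptual explanation of why $p$-elements are FC-elements; what the paper buys is brevity, since Theorem 8.19 does all the work at once. All the individual steps you use (support of central elements lies in $\Delta(G)$, local finiteness of the torsion part of an FC-group, semiprimeness for $p'$-groups) are standard and available in the cited literature, so I see no gap.
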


\begin{proof}
Let $\mathfrak{N}(KG)$ be the union of all  nilpotent ideals of $KG$.
In particular,   the central nilpotent elements of $KG$ are in $\mathfrak{N}(KG)$ and, consequently,    $KG/\mathfrak{N}(KG)$ is a regular ring.

Assume $\mathrm{char}(K)=p>0$. By Theorem 8.19 (\cite{Passman_book}, p.309),
\[
\mathfrak{N}(KG)=\mathfrak{Rad}(K[\Delta^p(G)])KG,
\]
where $\mathfrak{Rad}(K[\Delta^p(G)])$ is the Jacobson radical of the group ring $K[\Delta^p(G)]$.
Obviously, the augmentation ideal $\omega(K[\Delta^p(G)])$ is a
maximal  ideal of  $K[\Delta^p(G)]$, so
\[
\mathfrak{N}(KG)=\mathfrak{Rad}(K[\Delta^p(G)])KG\subseteq \omega(K[\Delta^p(G)])KG.
\]
It is well-known (see \cite{Connell}, Theorem 3, p.660) that
\[
K[G/(\Delta^p(G))]\cong KG/\omega(K[\Delta^p(G)])KG
\]
and therefore the group algebra $K[G/\Delta^p(G)]$ is regular, as a homomorphic
image of $KG/\mathfrak{N}(KG)$. This implies,  by the theorem  of Auslander, Connell and Willamayor
(see \cite{Connell}, Theorem 3, p.660), that $G/\Delta^p(G)$ is locally finite
and  has no $p$-element. Thus  we obtain that $\Delta^p(G)$ contains all the $p$-elements of $G$ and the group $G$ is locally finite (see \cite{Kargapolov_Merzliakov}, Theorem 23.1.1, p.215).

If  $\mathrm{char}(K)=0$, then $\mathfrak{N}(KG)=0$ and $KG$ is regular.
According to  Auslander-Connell-Villamayor's theorem the proof is complete.
\end{proof}

%#########################################

\section{$n$-weakly regular twisted group algebras}

Let $n\geq 2$ be a fixed natural number.
A ring $R$ is called {\it $n$-weakly regular} \cite{Gupta} if  for every  $a\in R$
there  exist elements $b,c\in R$ such that $a=aba^nc$.

Obviously, an $n$-weakly regular ring $R$ has no  nonzero nilpotent element. Indeed, if $R$
contains a nonzero nilpotent element, then there exists a nonzero nilpotent element $a\in R$ with $a^2=0$. Hence
$a=aba^nc=0$, which is impossible. From this fact we can conclude that all idempotents of an $n$-weakly regular ring are central.

In \cite{Bovdi_Langi} (Theorem 2, p.119) it was proved that the group algebra $KG$ over a  field $K$ is $n$-weakly regular ($n\geq 2$)
if and only if $K$ and $G$ satisfy at least one of the following two conditions:
\begin{itemize}
\item[(i)] $\mathrm{char}(K)=p>0$ and $G$ is an abelian torsion  group  without $p$-elements;
\item[(ii)] $\mathrm{char}(K)=0$ and $G$ is either an abelian  torsion group  or a hamiltonian
gr$G=Q\times E\times A$, where $A$ is an abelian  torsion group without $2$-elements and the equation
$x^2+y^2+z^2=0$ in $KA$    has only the trivial solution.
\end{itemize}
In the case when $K$ is an algebraically close field, this result
can be extended to.

\begin{theorem}\label{T:4}
A  twisted group algebra $K_\rho G$ of a group $G$ over the algebraically closed field $K$  is
$n$-weakly regular ($n\geq 2$) if and only if the following conditions hold:
\begin{itemize}
\item[(i)] $G$ is an abelian  torsion group;
\item[(ii)] the order of every  element of $G$ is invertible in $K$;
\item[(iii)]  the factor system $\rho$ is symmetric.
\end{itemize}
\end{theorem}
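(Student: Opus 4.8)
The plan is to obtain both implications almost entirely from the machinery already in place, with the only genuinely new work occurring in the sufficiency direction.

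For the necessity direction, I would start from the observation recorded just before the statement, that an $n$-weakly regular ring contains no nonzero nilpotent elements; hence $K_\rho G$ is a twisted group algebra without nilpotent elements. By Theorem \ref{T:2}(ii) the group $G$ is torsion, so the hypotheses of Theorem \ref{T:1} are met, and its forward implication yields at once that $G$ is abelian, that the order of every element of $G$ is invertible in $K$, and that $\rho$ is symmetric. Combined with the torsion property from Theorem \ref{T:2}, this is precisely (i), (ii) and (iii).

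For the sufficiency direction, assume (i), (ii) and (iii). Since $G$ is abelian and $\rho$ is symmetric, $K_\rho G$ is commutative. Fix an arbitrary $a\in K_\rho G$ and put $H=\gp{\mathrm{Supp}(a)}$. Because $G$ is an abelian torsion group, $H$ is a finitely generated torsion abelian group, hence finite, and $a$ lies in the subring $K_\rho H$. I would then argue that $K_\rho H$ is semisimple: by (ii) the order of every element of $H$ is invertible in $K$, so every prime dividing $|H|$ is invertible in $K$, whence $|H|$ itself is invertible in $K$, and Maschke's theorem for twisted group algebras gives that $K_\rho H$ is semisimple artinian. Being moreover commutative and finite-dimensional over the algebraically closed field $K$, it decomposes as $K_\rho H\cong K\times\cdots\times K$. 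A finite direct product of copies of $K$ is $n$-weakly regular (argue componentwise, inverting the nonzero entries), so there are $b,c\in K_\rho H$ with $a=aba^nc$; as $b,c\in K_\rho H\subseteq K_\rho G$ and the identity is the same in both rings, $a$ satisfies the defining relation in $K_\rho G$. Since $a$ was arbitrary, $K_\rho G$ is $n$-weakly regular.

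The main obstacle is the sufficiency direction, and within it the reduction to the finite subgroup $H$ and the verification that $K_\rho H$ is regular. The key point is that the mere absence of nilpotent elements supplied by Theorem \ref{T:1} is far too weak to force $n$-weak regularity, so one must exploit the local finiteness of the abelian torsion group $G$ to confine every element to a finite-dimensional semisimple subalgebra; the Maschke and structure-theoretic facts invoked there are the technical heart of the argument. If one prefers to avoid quoting $n$-weak regularity of $K\times\cdots\times K$, one may instead first produce $b$ with $a=a^2b$, then set $e=ab$ and use $a=ae=ae^n=a^{n+1}b^n=a\cdot 1\cdot a^n\cdot b^n$ to display the required witnesses explicitly.
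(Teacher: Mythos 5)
Your proof is correct and follows essentially the same route as the paper: necessity via the absence of nilpotents together with Theorems \ref{T:2} and \ref{T:1}, and sufficiency by confining $a$ to the finite subalgebra $K_\rho H$, which is a commutative semisimple artinian ring (hence a direct product of fields) and therefore $n$-weakly regular. The only difference is that you spell out the Maschke-type semisimplicity argument and an explicit witness $a=a\cdot 1\cdot a^n\cdot b^n$, where the paper simply cites Passman's Theorem 2.2.
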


\begin{proof}
Suppose that  $K_\rho G$ is $n$-weakly regular. Then conditions (i), (ii) and (iii) hold by  Theorems \ref{T:1} and \ref{T:2}.

Conversely, if $K$ and $G$ satisfy the conditions (i), (ii) and (iii), then
$K_\rho G$ is a commutative ring. Let $a\in K_\rho G$ be an arbitrary element. Then $a\in K_\rho H$, where
$H=\langle  \mathrm{Supp}(a)\rangle$ is a finite abelian group. Since   $K_\rho H$ is a commutative semisimple artinian ring (\cite{Passman_radical_I}, Theorem 2.2), we conclude that $K_\rho H$ is a direct product of fields, so $K_\rho H$ is $n$-weakly regular. This implies that $K_\rho G$ is $n$-weakly regular, as requested.
\end{proof}

Analyzing the result of \cite{Bovdi_Langi} (see Theorem 2, p.119)  on  $n$-weakly regular group rings
and \cite{Mihovski} (see Corollary  2, p.70) about strongly regular group rings we deduce that when $K$ is
a field, then these two classes coincide.

In the case of  twisted group algebras over an algebraically closed basic field we have the following.
\begin{corollary} \label{C:3}
Let $K_\rho G$ be a twisted group algebra  of a group  $G$ over an algebraically closed field $K$.
The following statements are equivalent:
\begin{itemize}
\item[(i)] $K_\rho G$ is strongly regular;
\item[(ii)] $K_\rho G$ is $n$-weakly regular for every natural number
$n\geq 2$;
\item[(iii)] $K_\rho G$ is $n$-weakly regular for some natural number
$n\geq 2$;
\item[(iv)] $G$ is an abelian  torsion group, the order of every  element of $G$ is invertible in $K$ and the
factor system $\rho$ is symmetric.
\end{itemize}
\end{corollary}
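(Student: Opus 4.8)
The plan is to use Theorem \ref{T:4} as the engine. Since the three conditions listed in Theorem \ref{T:4} are \emph{verbatim} the statement (iv), that theorem already asserts that $K_\rho G$ is $n$-weakly regular (for the given $n$) if and only if (iv) holds. From this the equivalence of (ii), (iii) and (iv) is almost immediate: (ii) $\Rightarrow$ (iii) is trivial; (iii) $\Rightarrow$ (iv) is the forward direction of Theorem \ref{T:4} applied to the particular $n$ for which $K_\rho G$ is $n$-weakly regular; and (iv) $\Rightarrow$ (ii) is the converse direction of Theorem \ref{T:4}, whose argument in fact produces the required $b,c$ for \emph{every} $n\geq 2$ at once, since each element already lies in a finite $K_\rho H$ that is a direct product of fields. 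Thus only the two links involving strong regularity remain to be supplied.

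For (iv) $\Rightarrow$ (i) I would simply reuse the localisation step from the converse of Theorem \ref{T:4}. Under (iv) the ring $K_\rho G$ is commutative, and any $a$ lies in $K_\rho H$ with $H=\gp{\mathrm{Supp}(a)}$ a finite abelian group; by the cited result of Passman this $K_\rho H$ is a commutative semisimple artinian ring, hence a finite direct product of fields. In a field, and so in a finite product of fields, one solves $ba^2=a$ componentwise (taking $b_i=a_i\m1$ when $a_i\neq 0$ and $b_i=0$ otherwise), so $a$ is strongly regular with a witness $b\in K_\rho H\subseteq K_\rho G$. Hence $K_\rho G$ is strongly regular.

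The remaining and, to my mind, the only genuinely delicate direction is (i) $\Rightarrow$ (iv). Here I would invoke the classical characterisation that a ring is strongly regular exactly when it is regular and has no nonzero nilpotent elements. Regularity gives, by the remark preceding Theorem \ref{T:2}, that $K_\rho G$ is a $\xi^*N$-ring, whence $G$ is a torsion group by Theorem \ref{T:2}(i). Having no nonzero nilpotent elements, and now knowing $G$ to be torsion, I may apply Theorem \ref{T:1}, which yields that $G$ is abelian, that the order of each element of $G$ is invertible in $K$, and that $\rho$ is symmetric; together with torsion this is precisely (iv). The subtle point, and the main obstacle, is the logical order: Theorem \ref{T:1} presupposes a torsion group, so one must first extract torsion from the $\xi^*N$-property (via regularity and Theorem \ref{T:2}) before the absence of nilpotents can be exploited through Theorem \ref{T:1}. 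Combining (i) $\Rightarrow$ (iv) and (iv) $\Rightarrow$ (i) with the equivalence of (ii), (iii), (iv) obtained above, all four statements are equivalent, as required.
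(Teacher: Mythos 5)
Your proof is correct, but it is organized differently from the paper's. The paper closes a single cycle (i)~$\Rightarrow$~(ii)~$\Rightarrow$~(iii)~$\Rightarrow$~(iv)~$\Rightarrow$~(i): for (i)~$\Rightarrow$~(ii) it argues directly inside a strongly regular ring (from $a=a^2b$ and the absence of nilpotents one gets $a=aba$, and then by induction an identity of the form required for $n$-weak regularity for every $n$), and for (iv)~$\Rightarrow$~(i) it simply invokes the Auslander--Connell--Villamayor theorem to say $K_\rho G$ is a commutative von Neumann regular ring. You instead detach the strong-regularity vertex: you get (ii)~$\Leftrightarrow$~(iii)~$\Leftrightarrow$~(iv) purely from Theorem~\ref{T:4}, prove (iv)~$\Rightarrow$~(i) by the localisation-to-finite-subgroups argument (each $a$ lies in a $K_\rho H$ that is a finite direct product of fields by Passman's Theorem~2.2, where $ba^2=a$ is solved componentwise), and prove (i)~$\Rightarrow$~(iv) via the classical fact that strongly regular $=$ regular $+$ reduced, feeding regularity into the $\xi^*N$ remark and Theorem~\ref{T:2} to obtain torsion, and then reducedness into Theorem~\ref{T:1}. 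Both routes are sound. Your (iv)~$\Rightarrow$~(i) is arguably the more careful one, since the Auslander--Connell--Villamayor theorem is stated for ordinary group rings and the paper itself observes earlier that it does not transfer verbatim to crossed products; your version stays entirely within results already established for twisted group rings. The cost is that your (i)~$\Rightarrow$~(iv) leans on an external classical characterisation of strongly regular rings and on two of the paper's harder theorems, whereas the paper's (i)~$\Rightarrow$~(ii) is a short self-contained computation. You also correctly flag the one genuine subtlety, namely that Theorem~\ref{T:1} presupposes a torsion group, so torsion must be extracted first via Theorem~\ref{T:2}.
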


\begin{proof}
Suppose that $K_\rho G$ is a strongly regular ring. If $a\in K_\rho G$ and $a=a^2b$, then
$a=aba$, because $K_\rho G$ does not contain nilpotent elements. Now by induction it follows that $a=ab^nc$ for some $c\in K_\rho G$ and for every natural number $n\geq 1$. So (i) implies (ii) and, obviously, (ii) implies (iii). By the preceding theorem, (iii) implies (iv). Finally, by the Auslander-Connell-Villamayor theorem and by  (iv) it follows that $K_\rho G$ is a commutative von Neumann ring and so (iv) implies (i). \end{proof}

%#########################################

\section{ $\xi N$-twisted group algebras}
A ring $R$ is called a $\xi N$-ring if for any $a\in
R$ there exists $b\in R$ such that  $a^2b-a$ is a  central
nilpotent element of $R$ (see \cite{Rakhnev}).

Obviously, every $\xi N$-ring is a $\xi$-ring  and, therefore,
(see \cite{Martindale}, Theorem 1, p.714) we deduce that every $\xi N$-ring is a $\xi^* N$-ring.
Moreover, (see \cite{Martindale}, Lemma 2, p.715) it follows that in  $\xi N$-rings all nilpotent
elements are central.

$\xi N$-group rings over commutative rings are described in \cite{Rakhnev} (Theorem 2, p.15).
From this description, it follows that a group ring $KG$
over a field $K$ of characteristic $p>0$ is a $\xi N$-ring
if and only if $G$ is an abelian  torsion group.

Finally  we prove the following.
\begin{theorem}\label{T:5}
A  twisted group algebra  $K_\rho G$ of a  group $G$ over the algebraically closed
field $K$ is a $\xi N$-ring  if
and only if the following conditions hold:
\begin{itemize}
\item[(i)] $G$ is an abelian  torsion  group;
\item[(ii)]  the factor system $\rho$ is symmetric.
\end{itemize}
\end{theorem}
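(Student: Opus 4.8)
The plan is to mirror the structure of Theorem \ref{T:1}, exploiting the two facts recalled just before the statement: every $\xi N$-ring is a $\xi^* N$-ring, and in a $\xi N$-ring every nilpotent element is central. For the forward implication, the first fact together with Theorem \ref{T:2}(i) immediately yields that $G$ is a torsion group, so the whole task reduces to deriving conditions (i) and (ii) from the centrality of nilpotents alone. My target is to show that, for each $g \in G$, the normalized basis element is central; from this, abelianness and symmetry of $\rho$ will fall out at once.

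Concretely, for $g$ of finite order $n$ I would rescale as in Theorem \ref{T:1}, choosing $\mu_g \in U(K)$ with $v_g = u_g\mu_g$ satisfying $v_g^n = 1$, and then prove that $v_g$ is central. The key preliminary step is the observation that in any ring in which all nilpotent elements are central, all idempotents are central as well: for an idempotent $e$ and any $r$, the element $er - ere$ squares to zero, hence is central, and centrality forces it to vanish, giving $er = ere = re$. With this in hand I would examine the finite-dimensional commutative subalgebra $K[v_g]$, decompose it into local summands, observe that their primitive idempotents are central, and use that $K$ is algebraically closed to write $v_g$ on each summand as a scalar multiple of a (central) idempotent plus a nilpotent; both summands are central, so $v_g$ is central. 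Since $v_g = u_g\mu_g$ and scalars are central in a twisted group ring, this gives $u_gu_h = u_hu_g$, i.e. $\rho(g,h)u_{gh} = \rho(h,g)u_{hg}$; comparing supports yields $gh = hg$ and then $\rho(g,h) = \rho(h,g)$. As $g,h$ are arbitrary, $G$ is abelian and $\rho$ is symmetric.

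For the converse I would assume (i) and (ii), so that $K_\rho G$ is commutative and ``central'' becomes automatic. Given $a \in K_\rho G$, put $H = \gp{\mathrm{Supp}(a)}$; since an abelian torsion group is locally finite, $H$ is finite and $K_\rho H$ is a finite-dimensional commutative, hence artinian, $K$-algebra. Note that here $K_\rho H$ need not be semisimple, because the orders of elements are no longer assumed invertible in $K$---this is exactly why condition (ii) of Theorems \ref{T:1} and \ref{T:4} is absent. Writing $K_\rho H$ as a product of local artinian rings, I would choose $b$ componentwise (an inverse on the factors where $a$ is a unit, and $0$ on the factors where $a$ lies in the nilpotent maximal ideal), so that $a^2b - a$ is nilpotent in $K_\rho H$, hence nilpotent and central in $K_\rho G$. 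Thus $K_\rho G$ is a $\xi N$-ring.

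I expect the passage ``all nilpotents central $\Rightarrow v_g$ central'' to be the main obstacle, chiefly because it must work uniformly in all characteristics. Handling the part of $v_g$ whose order is invertible in $K$ (the separable case) and the $p$-part in bad characteristic (the purely nilpotent case) separately is possible but clumsy; the local-decomposition argument above treats both at once, and the point I would check most carefully is that it nowhere smuggles in the hypothesis that orders are invertible, since dropping precisely that hypothesis is what distinguishes Theorem \ref{T:5} from Theorems \ref{T:1} and \ref{T:4}.
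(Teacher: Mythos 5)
Your proof is correct, and its forward direction takes a genuinely different --- and considerably shorter --- route than the paper's. The paper starts the same way (Martindale gives that a $\xi N$-ring is a $\xi^* N$-ring, so Theorem~\ref{T:2} yields torsion, and that all nilpotents are central), but then argues by brute force: it forms the square-zero, hence central, element $z=(v_g-1)v_h(1+v_g+\cdots+v_g^{n-1})$, expands the relation $zv_h=v_hz$ into an identity of supports, and runs a case analysis (three cases in characteristic $2$, four otherwise) to conclude that every cyclic subgroup of $G$ is normal; it must then separately exclude the hamiltonian alternative by exhibiting a non-central square-zero element built from $Q_8$ and a solution of $\alpha^2+\beta^2=0$, and finally extract the symmetry of $\rho$ from the surviving cases. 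Your argument replaces all of this with the observation that centrality of nilpotents forces centrality of idempotents ($er-ere$ and $re-ere$ are square-zero), and hence, via the decomposition of the finite-dimensional commutative algebra $K[v_g]$ over the algebraically closed field into local summands, each $v_g$ is a sum of scalar multiples of central idempotents and of central nilpotents and is therefore itself central; abelianness of $G$ and symmetry of $\rho$ then drop out at once by comparing supports in $u_gu_h=u_hu_g$. This avoids the characteristic split and the $Q_8$ computation entirely and makes transparent why, unlike in Theorems~\ref{T:1} and~\ref{T:4}, no invertibility of orders is required. Your converse is essentially the paper's: the paper passes to $K_\rho H/\mathfrak{Nil}(K_\rho H)$, a finite sum of fields, hence strongly regular, and lifts; your componentwise choice of $b$ in the product of local artinian rings is the same argument made explicit.
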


\begin{proof}
Let $K_\rho G$ be a  $\xi N$-ring. Then  (\cite{Martindale}, Theorem 1, p.714) the ring  $K_\rho G$ is a  $\xi^* N$-ring and,
in view of Theorem \ref{T:2}, we conclude that $G$ is a torsion group.  As far as $K$ is an  algebraically
closed field, for every element $g\in G$ of order $n$ there exists an $\mu_g\in U(K)$, such that $v_g=u_g\mu_g$\quad
$(u_g\in U_G)$ and $v_g^n=1$. Then we put
\[
z=(v_g-1)v_h(1+v_g+v_g^2+\cdots +v_g^{n-1}), \qquad\quad (h\in H).
\]
Clearly, $z^2=0$ and therefore $z$ is a central element of $K_\rho G$. Thus $zv_h=v_hz$ and, so  we obtain the equality
\begin{equation}\label{E:3}
\begin{split}
2v_hv_gv_h+\sum_{i=1}^{n-1}v_g^iv_hv_gv_h&+\sum_{i=2}^{n-1}v_hv_g^iv_h\\
&=\sum_{i=1}^{n-1}v_g^iv_h^2+\sum_{i=0}^{n-1}v_hv_g^iv_hv_g.
\end{split}
\end{equation}
If $\mathrm{char}(K)=2$, then $2v_hv_gv_h=0$. Consequently for the product $v_hv_g^2v_h$
and for the corresponding supports we obtain the following three cases:
\begin{itemize}
\item[(\textsc{a}1)] $v_hv_g^2v_h=v_g^iv_hv_gv_h$, $hg^2h=g^ihgh$ and \quad   $hgh\m1=g^i$ \quad   ($1\leq i\leq n-1$);
\item[(\textsc{a}2)] $v_hv_g^2v_h=v_hv_g^iv_hv_g$, $hg^2h=hg^ihg$ and \quad   $hgh\m1=g^{2-i}$ \quad ($1\leq i\leq n-1$);
\item[(\textsc{a}3)] $v_hv_g^2v_h=v_g^iv_h$, $hg^2h=g^ih^2$ and \quad   $hg^2h\m1=g^i$ \quad ($1\leq i\leq n-1$).
\end{itemize}
This shows  that $\gp{g^2}$ is a normal cyclic subgroup of $G$.

If $g$ is a $2$-element of $G$, then $1+v_g$ is nilpotent and by Lemma 2 of \cite{Martindale}  we deduce that
$1+v_g$  is a central element of $K_\rho G$. Therefore $v_gv_h=v_hv_g$ for every $h\in G$.

If $g$ is an element of odd order, then $\gp{g^2}=\gp{g}$ and from (\textsc{a1}), (\textsc{a}2) and (\textsc{a}3) we obtain that every cyclic subgroup of $G$ is normal, i.e. $G$
is either abelian,  or hamiltonian. Since the $2$-elements of $G$ are central, we conclude that $G$ is an abelian  torsion group,
i.e.  condition (i) holds.  Now by (\textsc{a}1) and (\textsc{a}2) it follows that $i=1$ and $v_gv_h=v_hv_g$.
In case (\textsc{a}3) we have $i=2$ and $v_hv_g^2=v_g^2v_h$. But
$\gp{v_g^2}=\gp{v_h}$, so $v_h$ commutes with $v_g^i$ for all $i=1,\ldots,n-1$. Therefore  condition (ii) also holds.

Now, suppose that $\mathrm{char}(K)\not=2$. Then by (\ref{E:3}),  we conclude that for the product $v_hv_gv_h$ we have the following four cases:
\begin{itemize}
\item[(\textsc{b}1)] $v_hv_gv_h=v_g^iv_h^2$, $hgh=g^ih^2$ and \quad   $hgh\m1=g^i$ \quad ($1\leq i\leq n-1$);
\item[(\textsc{b}2)] $v_hv_gv_h=v_hv_g^iv_hv_g$, $hgh=hg^ihg$ and \quad   $hgh\m1=g^{1-i}$ \quad ($0\leq i\leq n-1$);
\item[(\textsc{b}3)] $v_hv_gv_h=-v_hv_g^iv_h$, $hgh=hg^ih$ and $g^{i-1}=1$, which is impossible, because $2\leq i\leq n-1$ and $g$ is of order $n$;
\item[(\textsc{b}4)] $v_hv_gv_h=-v_g^iv_hv_gv_h$, $hgh=g^ihgh$ and $g^{i}=1$, which is impossible, because $1\leq i\leq n-1$.
\end{itemize}
Therefore $\gp{g}$ is a normal cyclic subgroup of $G$ for every $g\in G$. Hence $G$  is either abelian or a hamiltonian group.

Assume that $G$ is hamiltonian and $\gp{g,h\mid g^4=1, h^2=g^2, hgh\m1=g\m1}\cong Q_8$. Then by (\textsc{b}1) and  (\textsc{b}2), it follows that either $i=3$ or $i=2$, respectively. Hence we obtain that $v_hv_g=v_g^3v_h$, where $v_g^4=v_h^4=1$.

Let $(\alpha,\beta)$ be a nontrivial solution of the equation $x^2+y^2=0$ in $K$. Then as in the proof of  Theorem \ref{T:1} we establish that
\[
w=\alpha (v_g^2v_h-v_h)+\beta (v_g^3v_h-v_gv_h)
\]
is a nonzero nilpotent element of $K_\rho  G$
with $z^2=0$. Therefore $w$ is a central element of $K_\rho  G$. But $wv_h\not=v_hw$, so we obtain a contradiction. Thus $G$ is abelian and  condition (i) holds. If $gh=hg$, then by (\textsc{b1}) and (\textsc{b}2)  it follows that either $i=1$ or $i=2$, respectively. Hence we obtain that $v_hv_g=v_gv_h$ for all $g,h\in G$ and so  condition (ii) also follows.

Conversely, if the conditions (i) and (ii) hold, then $K_\rho  G$ is a commutative ring. For every element $a\in K_\rho  G$ with $H=\langle  \mathrm{Supp}(a) \rangle$,
the ring $K_\rho H$ is artinian and $R\cong K_\rho H/\mathfrak{Nil}(K_\rho  H)$ is a finite sum of fields. Therefore $R$ is strongly regular  and hence
$K_\rho  H$ is a $\xi N$-ring. Since  $a\in K_\rho H$, we deduce that $K_\rho  G$ is a $\xi N$-ring.

Note that if $K_\rho G$ is a $\xi N$-ring, then the periodicity of $G$ can be proved directly. Indeed, if $g\in G$ is an element of infinite order and $z=(u_g-1)^2x-(u_g-1)$ is a central nilpotent element of
$K_\rho G$, then $z^n=0$ for some $n\geq 1$. By Corollary \ref{C:1}  we deduce that
$[(u_g-1)x-1]z^{n-1}=0$.

Using the fact that $z$ is central, we can prove  by induction that
\[
[(u_g-1)x-1]^kz^{n-k}=0
\]
for every $k\geq 1$. Therefore $[(u_g-1)x-1]^{n}=0$. This equality shows that  $u_g-1$ is right invertible in $K_\rho G$, which again  is impossible by Corollary \ref{C:1}.\end{proof}

\end{document}